\theoremstyle{plain}
\newtheorem{theorem}{Theorem}
\theoremstyle{definition}
\newtheorem{definition}[theorem]{Definition}
\newtheorem{example}[theorem]{Example}
\theoremstyle{remark}
\newtheorem{remark}[theorem]{Remark}
\author{Jean-Paul Allouche \\
CNRS, IMJ-PRG \\
Sorbonne, 4 Place Jussieu \\
F-75252 Paris Cedex 05 \\
France \\
{\tt jean-paul.allouche@imj-prg.fr}\\
\and
Jeffrey Shallit \\
School of Computer Science \\
University of Waterloo \\
Waterloo, Ontario  N2L 3G1 \\
Canada \\
{\tt shallit@uwaterloo.ca}
}
\title{Automatic sequences are also non-uniformly morphic}
\begin{document}

\maketitle

\begin{abstract}
It is well-known that there exist
infinite sequences that are the fixed point of non-uniform morphisms,
but not $k$-automatic for any $k$.  In this note we show that 
every $k$-automatic sequence is the image of a fixed point of a
{\it non-uniform\/} morphism.
\end{abstract}

\section{Introduction, Definitions, Notation}

{\it Combinatorics on words} deals with ``alphabets'', ``words'', ``languages'', and ``morphisms of monoids''.  
The first three notions are inspired by the usual meaning of these words in English.  Below we recall the precise definitions.

\begin{definition}
A finite set ${\cal A}$ is called an {\em alphabet}. A {\em word} over the alphabet ${\cal A}$
is a finite (possibly empty) sequence of symbols from ${\cal A}$. We let ${\cal A}^*$ denote 
the set of all words on ${\cal A}$. A subset of ${\cal A}^*$ is called a {\em language} on ${\cal A}$. 
The {\em length} of a word $w$, denoted $|w|$, is the number of symbols that it contains (the length 
of the empty word $\epsilon$ is $0$). The {\em concatenation} of two words $w = a_1 a_2 \cdots a_r$ 
and $z = b_1 b_2 \cdots b_s$ of lengths $r$ and $s$, respectively, is the word 
denoted $wz$ defined 
by $wz = a_1 a_2 \cdots a_r b_1 b_2 \cdots b_s$ of length $r+s$ obtained by gluing $w$ and $z$ 
in order. The set ${\cal A}^*$ equipped with concatenation is called the {\em free monoid} generated 
by ${\cal A}$. The concatenation of a word $w = a_1 a_2 \cdots a_r$ and a sequence 
$(x_n)_{n \geq 0}$ is the sequence $a_1 a_2 \cdots a_r x_0 x_1 \cdots$, 
denoted $w \ (x_n)_{n \geq 0}$. A word $w$ is called a {\em prefix} of the word $z$ (or of the infinite 
sequence $(x_n)_{n \geq 0}$) if there exists a word $y$ with $z = wy$ (respectively a sequence 
$(y_n)_{n \geq 0}$ with $(x_n)_{n \geq 0} = w \ (y_n)_{n \geq 0}$). 

Let $(u_{\ell})_{\ell \geq 0}$ be a sequence of words in of ${\cal A}^*$, and $(a_n)_{n \geq 0}$ 
be a sequence over the alphabet ${\cal A}$. The sequence $(u_{\ell})_{\ell \geq 0}$ is said to {\em converge}
to the sequence $(a_n)_{n \geq 0}$ if the length of the largest prefix of $u_{\ell}$ that is also a 
prefix of $(a_n)_{n \geq 0}$ tends to infinity with $\ell$.
\end{definition}

\begin{remark}
It is straightforward that ${\cal A}^*$ equipped with concatenation is indeed
a monoid: concatenation is associative, and the empty word $\epsilon$ is the identity
element. This monoid is {\em free}; intuitively,
this means that there are no
relations between elements, other than the relations arising from
the associative property
and the fact that the empty word is the identity element. In particular, this
monoid is not {\em commutative} if ${\cal A}$ has at least two distinct
elements.
\end{remark}

\begin{definition}
Let ${\cal A}$ and ${\cal B}$ be two alphabets.
A {\em morphism} from ${\cal A}^*$ 
to ${\cal B}^*$ is a map $\varphi$ from
${\cal A}^*$ to ${\cal B}^*$ such that, 
for all words $u$ and $v$, one has $\varphi(uv) = \varphi(u)\varphi(v)$.
A morphism of ${\cal A}^*$ is a morphism from ${\cal A}^*$ to itself.

If there exists a positive integer $k$ such that $\varphi(a)$ has length $k \geq 1$
for all $a \in {\cal A}$, the morphism $\varphi$ is said to be {\em $k$-uniform}.
If a morphism is $k$-uniform for some $k \geq 1$, it is called a {\em uniform morphism}.  Otherwise it is {\em non-uniform}.  A $1$-uniform morphism is
sometimes called a {\em coding}.

\begin{example}
The {\it Thue-Morse morphism} $\mu$ sending $0 \rightarrow 01$ and
$1 \rightarrow 10$ is $2$-uniform.   In contrast, the {\it
Fibonacci morphism}
$\tau$ sending $a$ to $ab$ and $b$ to $a$ is non-uniform.
\end{example}

\end{definition}

\begin{remark}
A morphism $\varphi$ from ${\cal A}^*$ to ${\cal B}^*$ is 
completely determined by the 
values of $\varphi(a)$ for $a \in {\cal A}$. Namely, if the word $u$ is equal 
to $a_1 a_2 \cdots a_n$ with $a_j \in {\cal A}$, then $\varphi(u) = 
\varphi(a_1) \varphi(a_2) \cdots \varphi(a_n)$.
\end{remark}

\begin{definition}
An infinite sequence $(a_n)_{n \geq 0}$ taking values in the alphabet 
${\cal A}$ is said to be {\em pure morphic} if there exist a morphism
$\varphi$ of ${\cal A}^*$ and a word $x \in {\cal A}^*$ such that
\begin{itemize}
\item the word $\varphi(a_0)$ begins with $a_0$; i.e.,
        there exists a word $x$ such that $\varphi(a_0) = a_0 x$;
\item iterating $\varphi$ starting from $x$ never gives the empty word, i.e., 
         for each integer $\ell$, $\varphi^{\ell}(x) \neq \epsilon$;
\item the sequence of words $(\varphi^{\ell}(a_0))_{\ell \geq 0}$ converges to the
         sequence $(a_n)_{n \geq 0}$ when $\ell \to \infty$.
\end{itemize}
\end{definition}

\begin{remark}
It is immediate that 
\begin{align*}
\varphi(a_0)   &= a_0 x \\
\varphi^2(a_0) &= \varphi(\varphi(a_0)) = \varphi(a_0 x) = \varphi(a_0) \varphi(x)
= a_0 x \varphi(x) \\
\varphi^3(a_0) &= \varphi(\varphi^2(a_0)) = \varphi(a_0 x \varphi(x)) =
\varphi(a_0) \varphi(x) \varphi^2(x) = a_0 x \varphi(x) \varphi^2(x) 
\end{align*}
and more generally
$$ \varphi^{\ell}(a_0) = a_0 x \varphi(x) \varphi^2(x) \cdots \varphi^{\ell-1}(x) $$
for all $\ell \geq 0$.
\end{remark}

\begin{definition}
An infinite sequence $(a_n)_{n \geq 0}$ taking values in
${\cal A}$ is said to be {\em morphic} if there exist an alphabet 
${\cal B}$ and an infinite sequence $(b_n)_{n \geq 0}$ over the alphabet
${\cal B}$
such that
\begin{itemize}
\item the sequence $(b_n)_{n \geq 0}$ is pure morphic;
\item there exists a coding  from ${\cal B}^*$ to
      ${\cal A}^*$ sending the sequence $(b_n)_{n \geq 0}$ to
      the sequence $(a_n)_{n \geq 0}$; i.e., the sequence 
      $(a_n)_{n \geq 0}$ is the pointwise image of $(b_n)_{n \geq 0}$.
\end{itemize}
If the morphism making $(b_n)_{n \geq 0}$ morphic is $k$-uniform,
then the sequence $(a_n)_{n \geq 0}$  is said to be {\em $k$-automatic}.
The word ``automatic'' comes from the fact that the sequence $(a_n)_{n \geq 0}$
can be generated by a finite automaton (see \cite{AS} for more details on this topic).
\end{definition}

\begin{remark}
A morphism $\varphi$ of ${\cal A}^*$ can be extended to infinite sequences
with values in ${\cal A}$ by defining
$$\varphi((a_n)_{n \geq 0}) = \varphi(a_0 a_1 a_2 \cdots) := 
\varphi(a_0) \varphi(a_1) \varphi(a_2) \cdots .$$
It is easy to see that a pure morphic sequence is a {\em fixed point} of 
(the extension to infinite sequences of) some morphism: actually, with the
notation above, it is {\em the} fixed point of $\varphi$ beginning with 
$a_0$. A pure morphic sequence is also called an {\em iterative fixed point} 
of some morphism (because of the construction of that fixed point), while a 
morphic sequence is the pointwise image of an iterative fixed point of some 
morphism, and a $k$-automatic sequence is the pointwise image of the iterative 
fixed point of a $k$-uniform morphism.
\end{remark}

\section{The main result}

Looking at the definitions above,
we see that every automatic sequence is also a
morphic sequence.
We will prove that every automatic sequence can be obtained as
a morphic sequence where the involved morphism is {\em not uniform}.
 
\begin{definition}
We say a sequence is {\em non-uniformly pure morphic} if it is the iterative
fixed point of a non-uniform morphism. We say that a sequence is
{\em non-uniformly morphic} if it is the image (under a coding)
of a non-uniformly pure morphic sequence. 
\end{definition}
For example, the sequence $abaababa \cdots$ generated by iterating the morphism 
$\tau$ defined above is non-uniformly pure morphic. This sequence 
is known as the {\em (binary) Fibonacci sequence}, since it is also equal to the limit 
of the sequence of words $(u_n)_{n \geq 0}$ defined by $u_0 := a$, 
$u_1 := ab$, $u_{n+2} := u_{n+1} u_n$ for each $n \geq 0$.

In order to avoid triviality, we certainly assume
(as M. Mend\`es France once pointed out to us)
that the alphabet of the 
non-uniform morphism involved in the above definition is
the same as the minimal 
alphabet of its fixed point. For example, the
fact that the morphism $0 \to 01$, $1 \to 10$, 
$2 \to 1101$, whose iterative fixed point beginning with $0$ is
also the iterative fixed point, beginning with $0$,
of the morphism $\mu$ --- namely the Thue-Morse sequence ---
does not make that sequence non-uniformly morphic.)

Although most non-uniformly morphic sequences are not automatic (e.g.,
the binary Fibonacci sequence is not automatic), some sequences can be 
simultaneously automatic and non-uniformly morphic. An example is the 
sequence ${\cal Z}$ formed by the lengths of the blocks of $1$'s between 
two consecutive zeros in the Thue-Morse sequence.
$$
\begin{array}{lll}
           && 0 \ 1 \ 1 \ 0 \ 1 \ 0 \ 0 \ 1 \ 1 \ 0 \ 0 \ 1 \ 0 \ 
                1 \ 1 \ 0 \ \cdots \\
           && 0 \ (1 1) \ 0 \ (1) \ 0 \ ( \ ) \  0 \ (1 1) \ 0 \ 
               ( \ ) \ 0 \ (1) \ 0 \ (1 1) \ 0 \ \cdots \\ 
{\cal Z}   &=& 2 \ 1 \ 0 \ 2 \ 0 \ 1 \ 2 \ \cdots \\
\end{array}
$$
As is well known \cite{Berstel}, this sequence is both the fixed point of the 
map sending $2 \rightarrow 210$, $1 \rightarrow 20$, and $0 \rightarrow 1$,
and also the image, under the coding $0 \rightarrow 2$, $1 \rightarrow 1$, 
$2 \rightarrow 0$, $3 \rightarrow 1$ of the fixed point of the map $0 \rightarrow 01$, 
$1 \rightarrow 20$, $2 \rightarrow 23$, and $3 \rightarrow 02$.

In view of this example, one can ask which non-uniformly morphic sequences are also 
$k$-automatic for some integer $k \geq 2$, or which automatic sequences are also 
non-uniformly morphic. We prove here that {\em all\/} automatic sequences are
also non-uniformly morphic.

\begin{theorem}
\label{newAutomatic}
Let $(a_n)_{n \geq 0}$ be an automatic sequence taking
values in the alphabet ${\cal A}$. Then $(a_n)_{n \geq 0}$
is {\em also} non-uniformly morphic. Furthermore, if
$(a_n)_{n \geq 0}$ is the iterative fixed point of a uniform
morphism, then there exist an alphabet ${\cal B}$ of 
cardinality $(3 + \#{\cal A})$ and a sequence $(a'_n)_{n \geq 0}$ 
with values in ${\cal B}$, such that $(a'_n)_{n \geq 0}$ is the
iterative fixed point of some non-uniform morphism with domain ${\cal B}^*$
and $(a_n)_{n \geq 0}$ is the image of $(a'_n)_{n \geq 0}$ under a 
coding.
\end{theorem}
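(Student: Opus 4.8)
The plan is to prove the stronger ``Furthermore'' statement and deduce the general one by composing codings: if $(a_n)_{n\geq 0}=\rho_0\big((b_n)_{n\geq 0}\big)$ with $(b_n)_{n\geq 0}$ the iterative fixed point of a uniform morphism, then the refined statement gives $(b_n)_{n\geq 0}=\rho\big((b'_n)_{n\geq 0}\big)$ with $(b'_n)_{n\geq 0}$ the iterative fixed point of a non-uniform morphism, and $(a_n)_{n\geq 0}=(\rho_0\circ\rho)\big((b'_n)_{n\geq 0}\big)$ with $\rho_0\circ\rho$ again a coding. So assume $(a_n)_{n\geq 0}$ is the iterative fixed point of a $k$-uniform morphism $\varphi$ of ${\cal A}^*$; necessarily $k\geq 2$ (a $1$-uniform morphism has no infinite iterative fixed point), so $\varphi(a_0)=a_0a_1\cdots a_{k-1}$ and $(a_n)_{n\geq 0}=\varphi\big((a_n)_{n\geq 0}\big)=\varphi(a_0)\varphi(a_1)\varphi(a_2)\cdots$.

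The idea is to ``re-block'' this infinite product so that the resulting morphism is non-uniform while leaving the letters at positions $\geq 3$ untouched. Replace the length-$k$ blocks $\varphi(a_0),\varphi(a_1),\varphi(a_2)$ by three blocks of lengths $k+1,k,k-1$ --- which together still cover exactly the length-$3k$ prefix $a_0a_1\cdots a_{3k-1}$ --- and keep the blocks $\varphi(a_3),\varphi(a_4),\ldots$ from there on. Concretely, adjoin three fresh letters $p,q,r$, put ${\cal B}={\cal A}\cup\{p,q,r\}$, and let $\rho$ be the coding equal to the identity on ${\cal A}$ with $p\mapsto a_0$, $q\mapsto a_1$, $r\mapsto a_2$. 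Factor the subword of $(a_n)_{n\geq 0}$ occupying positions $3,\ldots,3k-1$ as $a_3a_4\cdots a_{3k-1}=w_0w_1w_2$, where $|w_0|=k-2$, $|w_1|=k$, $|w_2|=k-1$, and define the morphism $\psi$ of ${\cal B}^*$ by
\[
\psi(a)=\varphi(a)\ \ (a\in{\cal A}),\qquad \psi(p)=p\,q\,r\,w_0,\qquad \psi(q)=w_1,\qquad \psi(r)=w_2 .
\]
Then $|\psi(p)|=k+1\neq k-1=|\psi(r)|$, so $\psi$ is non-uniform; and $\psi(p)$ begins with $p$ and has length $k+1\geq 3>1$, so $\psi$ has a well-defined iterative fixed point $(a'_n)_{n\geq 0}$ beginning with $p$.

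Next, identify this fixed point: I claim $(a'_n)_{n\geq 0}=p\,q\,r\,a_3a_4a_5\cdots$, that is, $(a_n)_{n\geq 0}$ with its first three letters retagged by $p,q,r$. As this word begins with $p$, it suffices to check it is fixed by $\psi$:
\[
\psi(p)\psi(q)\psi(r)\,\psi(a_3)\psi(a_4)\cdots
= p\,q\,r\,w_0w_1w_2\cdot\varphi(a_3)\varphi(a_4)\cdots
= p\,q\,r\,a_3\cdots a_{3k-1}\cdot\varphi\big(a_3a_4\cdots\big);
\]
since $\varphi(a_0)\varphi(a_1)\varphi(a_2)$ is the length-$3k$ prefix $a_0\cdots a_{3k-1}$ of $(a_n)_{n\geq 0}=\varphi\big((a_n)_{n\geq 0}\big)$, we get $\varphi\big(a_3a_4\cdots\big)=a_{3k}a_{3k+1}\cdots$, so the right-hand side equals $p\,q\,r\,a_3a_4a_5\cdots$, as claimed. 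Applying $\rho$ yields $\rho\big((a'_n)_{n\geq 0}\big)=a_0a_1a_2a_3\cdots=(a_n)_{n\geq 0}$; hence $(a_n)_{n\geq 0}$ is non-uniformly morphic. Finally $p,q,r$ occur in $(a'_n)_{n\geq 0}$ only as produced by $\psi(p)$, hence exactly once each, so ${\cal B}$ is essentially the minimal alphabet of $(a'_n)_{n\geq 0}$, of cardinality $3+\#{\cal A}$.

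The step requiring care is the choice of re-blocking. Lengthening a single block by one forces a permanent ``$+1$'' shift of every subsequent block-boundary, and then the fixed-point equation demands, for every $n$, an impossible identity relating $\varphi(a_n)$ to a rearrangement of $\varphi(a_n)\varphi(a_{n+1})$; so the length perturbation must be confined to a bounded prefix, and this is exactly what the compensating shorter block and the three extra letters (one per perturbed position $0,1,2$) provide. Once the length pattern $(k+1,k,k-1)$, of total length $3k$, is fixed, the verification above is immediate from $(a_n)_{n\geq 0}=\varphi\big((a_n)_{n\geq 0}\big)$.
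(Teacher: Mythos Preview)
Your argument is correct and substantially simpler than the paper's. The paper works ``internally'': it first arranges that $a_0$ is a fresh letter, disposes of the ultimately periodic case, then (via a growth argument) locates a two-letter factor $bc$ with $\gamma(bc)=w_1bcw_3$, introduces copies $b',c'$, and splits $w_1bcw_3$ into two unequal pieces to define $\gamma'(b'),\gamma'(c')$; the new letters $b',c'$ then recur infinitely often in the fixed point. You instead perturb only the first three blocks, re-cutting the length-$3k$ prefix as $(k{+}1,k,k{-}1)$ and tagging positions $0,1,2$ with fresh letters $p,q,r$; the non-uniform letters occur exactly once. Your route avoids the expanding-letter lemma, the periodic case split, and the ``$a_0$ fresh'' reduction, and the fixed-point verification is a one-line consequence of $\varphi(a_i)=a_{ki}\cdots a_{ki+k-1}$. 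The paper's construction, on the other hand, yields a non-uniform morphism whose non-uniform behaviour is visible infinitely often, which one might regard as less ``transient'', though the definition does not demand this.

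One small point: your final sentence asserts that ${\cal B}$ is the minimal alphabet of $(a'_n)_{n\ge 0}$, but a letter of ${\cal A}$ could in principle occur only among $a_0,a_1,a_2$ and hence be absent from $pqr\,a_3a_4\cdots$. This does not harm the main conclusion---restricting $\psi$ to the letters actually used still leaves $|\psi(p)|=k{+}1\neq k{-}1=|\psi(r)|$---so the sequence is non-uniformly morphic in the intended (minimal-alphabet) sense; it just means the bound is really ``at most $3+\#{\cal A}$''. The paper has the same slack: its proof concludes with ``at most only three new letters'', matching rather than exactly meeting the stated cardinality.
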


\begin{proof}
We start with the first assertion. First, we may suppose that the first letter of $(a_n)_{n \geq 0}$ 
is different from all $a_j$ for $j \geq 1$. If not, take a letter $\alpha$ not in ${\cal A}$ and consider
the sequence $\alpha a_1 a_2 \cdots$. This sequence is automatic and the morphism 
$\alpha \to a_0$ and $a \to a$ for all letters $a$ in ${\cal A}$ sends it to $(a_n)_{n \geq 0}$. 

We may also suppose that the sequence $(a_n)_{n \geq 0}$ is not ultimately periodic (otherwise 
the result is trivial: if $u$ and $v$ are two words over the
alphabet ${\cal A}$, the sequence $u v v v \cdots$ is 
the iterative fixed point of the morphism $\alpha \to u$ and $a \to v^j$ for all $a \in {\cal A}$, where 
$j$ is chosen so that $j|v| \neq |u|$).

Thus we now start with an automatic non-ultimately periodic 
sequence, still called $(a_n)_{n \geq 0}$, with $a_0 = \alpha \neq a_1$. Since the sequence 
$(a_n)_{n \geq 0}$ is the pointwise image of the iterative fixed point $(x_n)_{n \geq 0}$ of some 
uniform morphism, we may suppose, by replacing $(a_n)_{n \geq 0}$ with $(x_n)_{n \geq 0}$, that 
$(a_n)_{n \geq 0}$ itself is the iterative fixed point beginning with $a_0 = \alpha \neq a_j$ for all
$j \geq 1$ of a uniform morphism $\gamma$ with domain ${\cal A}^*$, and still non-ultimately periodic.

We claim that there exists a $2$-letter word $bc$ such that $\gamma(bc)$ contains $bc$ as a 
factor. Namely, since $\gamma$ is uniform, it has exponential growth
(that is, iterating 
$\gamma$ on each letter gives words of exponentially growing length).
Hence there
exists a letter $b$ that is {\em expanding}; i.e., such that some power of
$\gamma$ maps $b$ to a word that contains at least two occurrences of $b$ 
(see, e.g., \cite{Salomaa}). By replacing $\gamma$ with this power of $\gamma$, 
we can write $\gamma(b) = ubvbw$ for some words $u, v, w$. By replacing this
new $\gamma$ with $\gamma^2$,
we can also suppose that both $u$ and $w$ are nonempty. 
Let $c$ be the letter following the prefix $ub$ of $ubvbw$.  Now there
are two cases:
\begin{itemize}
\item if $c \neq b$, then $v = cy$ for some word $y$, and $\gamma(b) = ubcyw$,
and $\gamma(bc) = \gamma(b)\gamma(c)$ 
contains $bc$ as a factor;
\item if $c=b$, then $\gamma(b) = ubbz$ for some word $z$, and 
$\gamma(bb) = ubbzubbz$ contains $bb$ as a factor.
\end{itemize}
In both cases, there exist two 
letters $b$ and $c$, not necessarily distinct,
such that $\gamma(b) = w_1 b c w_2$ and $\gamma(bc) = w_1 b c w_3$,
where $w_1, w_2$ are non-empty words. Note, in particular,
that $b$ can be chosen distinct from $a_0$ ($w_1$ is non-empty and $a_0 = \alpha$ is different from all $a_j$ for
$j \geq 1$).

Now define a new alphabet ${\cal A}' := {\cal A} \cup \{b',c'\}$, where $b', c'$ are 
two new letters not in ${\cal A}$. Define the morphism $\gamma'$ with
domain ${\cal A}'$ as follows: 
if the letter $y$ belongs to ${\cal A} \setminus \{b\}$,
then $\gamma'(y) := \gamma(y)$.
If $y = b$, define $\gamma'(b) := w_1 b' c' w_2$. Finally, define $\gamma'(b')$ and
$\gamma'(c')$ as follows: first recall that $\gamma(bc) = w_1 b c w_3$; cut the word
$w_1 b c w_3$ into (any) two non-empty words of unequal length, say $w_1 b c w_3 := zt$, 
and define $\gamma'(b') := z$, $\gamma'(c') := t$. 

By construction, $\gamma'$
is not uniform. Its iterative fixed point beginning with $a_0$ clearly exists,
and we denote it by
$(a'_n)_{n \geq 0}$. This sequence has the property that each $b'$ in it is followed by a $c'$
and each $c'$ is preceded by a $b'$. 
We let $D$ denote the coding that sends each letter of ${\cal A}$ to itself, and 
sends $b'$ to $b$ and $c'$ to $c$. For every letter $x$ belonging to ${\cal A}' \setminus \{b, \, b', c'\}$ 
we have $\gamma(x) = \gamma'(x)$. Hence $D \circ \gamma'(x) = D \circ \gamma(x) = \gamma(x)$.
For $x = b$, we have $D \circ \gamma'(b) = D(w_1 b' c' w_2) = w_1 b c w_2 = \gamma(b)$. 
Furthermore, we have $D \circ \gamma'(b'c') = D(zt) = zt = w_1bcw_3 = \gamma(bc)$.

Now let $P_k$ be the prefix of the sequence $(a'_n)_{n \geq 0}$ that ends with $c'$ and contains 
exactly $k$ occurrences of the letter $c'$. Each occurrence of $c'$ must be preceded by a $b'$, so that
$P_k$ can be written $P_k = p_1 b'c' p_2 b'c' \cdots p_k b'c'$ where the $p_i$'s are words over
the alphabet ${\cal A}$. We have
\begin{align*}
D \circ \gamma' (P_k) &= D \circ \gamma' (p_1 b'c' p_2 b'c' \cdots p_k b'c') \\
&= (D \circ \gamma'(p_1)) (D \circ \gamma'(b'c')) (D \circ \gamma'(p_2)) (D \circ \gamma'(b'c'))
\cdots (D \circ \gamma'(p_k)) (D \circ \gamma' (b'c')) \\
&= \gamma(p_1) \gamma(bc) \gamma(p_2) \gamma(bc) \cdots \gamma(p_k) \gamma (bc) \\
&= \gamma(p_1 bc p_2 bc \cdots p_k bc) \\
&= \gamma \circ D (p_1 b'c' p_2 b'c' \cdots p_k b'c')  \\
&= \gamma \circ D(P_k) .
\end{align*}
Letting $k$ go to infinity, we obtain that
$D \circ \gamma'((a'_n)_{n \geq 0}) = \gamma \circ D((a'_n)_{n \geq 0})$, but 
$\gamma'((a'_n)_{n \geq 0}) = (a'_n)_{n \geq 0}$, so that
$D((a'_n)_{n \geq 0}) = \gamma \circ D((a'_n)_{n \geq 0})$.
Hence $D((a'_n)_{n \geq 0})$ is the iterative fixed point of $\gamma$ 
beginning with $a_0$. Hence 
it is equal to the sequence $(a_n)_{n \geq 0}$.

The second assertion is a consequence of the fact that we introduced at most only three
new letters $\alpha, b', c'$ in the proof above.  
\end{proof}

\end{document}